\documentclass[11pt,draft,reqno]{amsart}

\usepackage{amsmath,amssymb,amscd,amsfonts,amsthm}
\usepackage{graphics}
\usepackage{dsfont}
\usepackage[all]{xy}
\usepackage{enumerate}

\usepackage{array,amscd,latexsym, verbatim}


\topmargin=16pt
\textheight=610pt
\textwidth=360pt
\oddsidemargin=55pt
\evensidemargin=55pt


{\catcode`\@=11
\gdef\n@te#1#2{\leavevmode\vadjust{%
 {\setbox\z@\hbox to\z@{\strut#1}%
  \setbox\z@\hbox{\raise\dp\strutbox\box\z@}\ht\z@=\z@\dp\z@=\z@%
  #2\box\z@}}}
\gdef\leftnote#1{\n@te{\hss#1\quad}{}}
\gdef\rightnote#1{\n@te{\quad\kern-\leftskip#1\hss}{\moveright\hsize}}
\gdef\?{\FN@\qumark}
\gdef\qumark{\ifx\next"\DN@"##1"{\leftnote{\rm##1}}\else
 \DN@{\leftnote{\rm??}}\fi{\rm??}\next@}}
%



\DeclareOption{loadcyr}{\cyr@true}

\DeclareFontFamily{OT1}{wncyr}{\hyphenchar\font45 }
\DeclareFontShape{OT1}{wncyr}{m}{n}{%
   <5> <6> <7> <8> <9> gen * wncyr
   <10> <10.95> <12> <14.4> <17.28> <20.74>  <24.88>wncyr10}{}
\DeclareFontShape{OT1}{wncyr}{m}{it}{%
   <5> <6> <7> <8> <9> gen * wncyi
   <10> <10.95> <12> <14.4> <17.28> <20.74> <24.88> wncyi10}{}
\DeclareFontShape{OT1}{wncyr}{m}{sc}{%
   <5> <6> <7> <8> <9> <10> <10.95> <12> <14.4>
   <17.28> <20.74> <24.88>wncysc10}{}
\DeclareFontShape{OT1}{wncyr}{b}{n}{%
   <5> <6> <7> <8> <9> gen * wncyb
   <10> <10.95> <12> <14.4> <17.28> <20.74> <24.88>wncyb10}{}
\input cyracc.def
\def\rus{\usefont{OT1}{wncyr}{m}{n}\cyracc\fontsize{9}{11pt}\selectfont}

\DeclareMathSizes{9}{9}{7}{5}


\theoremstyle{plain}

\newtheorem{theorem}{Theorem}

\newtheorem{corollary}{Corollary}

\theoremstyle{definition}

\newtheorem{conjecture}{Conjecture}

\newtheorem{definition}{Definition}
\newtheorem{remark}{\it Remark}

\newtheorem{nothing*}[theorem]{}
\newtheorem{subnothing*}[sub]{}

\theoremstyle{remark}

\begin{document}

\title[Finite subgroups of diffeomorphism groups]
{Finite subgroups of\\ diffeomorphism groups}

\author[Vladimir  L. Popov]{Vladimir  L. Popov${}^*$}
\address{Steklov Mathematical Institute,
Russian Academy of Sciences, Gubkina 8, Moscow 119991,
Russia}

\address{National Research University Higher School of Economics, Myasnitskaya
20, Moscow 101000, Russia} \email{popovvl\char`\@mi.ras.ru}

\thanks{
 ${}^*$\,Supported by
 grants {\rus RFFI
11-01-00185-a}, {\rus N{SH}--2998.2014.1}, and the
program {\it Contemporary Problems of Theoretical
Mathematics} of the Russian Academy of Sciences, Branch
of Mathematics.}

\maketitle

\

\vskip -18mm

\

\begin{abstract}
We prove:\;(1) 
 the existence, for every integer $n\geqslant 4$, of a
 {\it noncompact} smooth $n$-dimensional
 topological ma\-ni\-fold whose diffeo\-mor\-phism group
 contains
an isomorphic copy of {\it every}\, finitely presented group; (2)
a finiteness theorem
on finite simple subgroups of diffeomor\-phism groups of {\it compact} smooth topological manifolds.
\end{abstract}

\vskip 2mm

\noindent {\bf 1.}
 In \cite[Sect.\,2]{Po1} was introduced the definition of abstract Jordan group
  and initiated
 exploration of the following two problems on automorphism groups and birational self-map groups of algebraic varieties:
 \vskip 2mm
\noindent{\bf Problem A.} Describe algebraic varieties $X$ for which ${\rm Aut}(X)$ is Jordan.

\noindent{\bf Problem B.} The same with ${\rm Aut}(X)$ replaced by ${\rm Bir}(X)$.

\vskip 2mm
Recall from \cite[Def.\;2.1]{Po1} that Jordaness of a group is defined as follows:

\begin{definition}
\label{Jd}
A group $G$ is called a {\it Jordan group}
if there exists a positive integer $d_G$,
depending on $G$ only,
such that every finite subgroup
$F$ of $G$ contains a normal abelian subgroup whose index in $F$ is at most $d_G$.
\end{definition}

Informally Jordaness means that all finite subgroups of $G$ are ``almost'' abelian in the sense that they are extensions of abelian groups by finite groups taken from a finite list.

Since the time of launching the exploration program of
Problems A and B
 a rather extensive information on them has been obtained (see \cite{Po2}), but, for instance, at this writing (October 2013) is still unknown whether there exists $X$ with non-Jordan group ${\rm Aut}(X)$ (see Question\,1 in \cite{Po2}; it is interesting to juxtapose it with Corollary \ref{2} below).

\vskip 2mm

\noindent {\bf 2.}  In this note the counterpart of Problem A is explored, in which algebraic varieties $X$ are replaced by connected smooth topological manifolds $M$, and ${\rm Aut}(X)$ is replaced by ${\rm Diff}(M)$, the diffeomorphism group of $M$.\;It it  shown that the situation for noncompact manifolds is quite different from that for compact ones.

\vskip 2mm
The author is grateful to J-P. Serre and M. Brin for the comments.

\vskip 2mm

\noindent {\bf 3.}  First, we consider the case of noncompact manifolds.

Recall that a group is called finitely presented if it is presented by finitely many generators and relations.

\begin{theorem}\label{g} For every integer $n\geqslant 4$, 
there exists a simply connected noncompact smooth oriented $n$-dimensional topological manifold\;$C_n$\;such that the group
${\rm Diff}(C_n)$ contains
an isomorphic copy of every
finitely presented group, and  this copy is a discrete transformation group
of $C_n$ acting freely\footnote{i.e., for this action, the stabilizer of every point of $C_n$ is trivial}.
\end{theorem}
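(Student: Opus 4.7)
The plan is to realize $C_n$ as the universal cover of a single closed smooth manifold $N$ whose fundamental group is large enough to contain every finitely presented group. The essential ingredient is a theorem of G. Higman asserting the existence of a finitely presented group $U$ that contains an isomorphic copy of every finitely presented group as a subgroup (a consequence of his celebrated embedding theorem, since the disjoint sum of all finite presentations is a recursively presented group). Granting such a $U$, the problem reduces to realizing this one specific finitely presented group as $\pi_1$ of a closed smooth oriented $n$-manifold.

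For that realization I would carry out the classical surgery construction. Fix a presentation $U=\langle x_1,\dots,x_k\mid r_1,\dots,r_\ell\rangle$. Start with $N_0=\#_k(S^1\times S^{n-1})$, a closed smooth oriented $n$-manifold with $\pi_1(N_0)=F_k$. For each relator $r_j$ pick a smooth embedded loop $\gamma_j\subset N_0$ in the corresponding conjugacy class; embeddedness is generic because $n\geqslant 3$, and since $N_0$ is orientable the normal bundle of $\gamma_j$ is trivial, so $\gamma_j$ can be framed. Perform the surgery that removes a tubular neighborhood $S^1\times D^{n-1}$ and glues in $D^2\times S^{n-2}$. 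This is precisely where $n\geqslant 4$ enters: the attached piece $D^2\times S^{n-2}$ is simply connected (because $n-2\geqslant 2$), so Van Kampen shows that the surgery kills exactly the class $[\gamma_j]$ in $\pi_1$ and introduces no new generators or relations, while orientability is preserved throughout. After the $\ell$ surgeries one obtains a closed smooth oriented $n$-manifold $N$ with $\pi_1(N)\cong U$.

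I would then set $C_n:=\widetilde{N}$, the universal cover of $N$, endowed with the lifted smooth structure and orientation. By construction $C_n$ is a smooth oriented simply connected $n$-manifold; it is noncompact because $U$ is infinite (it contains, for instance, a copy of $\mathbb{Z}$). The deck transformation group is a subgroup of ${\rm Diff}(C_n)$ canonically isomorphic to $U$, and its action is free and properly discontinuous. For any finitely presented group $G$ the Higman embedding $G\hookrightarrow U$, restricted to the deck action, therefore yields a free discrete action of $G$ on $C_n$ by diffeomorphisms, which is exactly the statement of the theorem.

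The main obstacle is conceptual rather than computational: it is the existence of Higman's universal finitely presented group $U$, which is a nontrivial result of combinatorial group theory. Once $U$ is in hand, the topological assembly is a routine application of the standard construction realizing an arbitrary finitely presented group as the fundamental group of a closed smooth oriented manifold, which is available precisely in the range $n\geqslant 4$.
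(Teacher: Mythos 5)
Your proposal is correct and follows essentially the same route as the paper: Higman's universal finitely presented group $\mathcal U$, realization of $\mathcal U$ as the fundamental group of a closed smooth oriented manifold, and then the deck transformation group of the universal cover gives the free, properly discontinuous action containing every finitely presented group. The only difference is cosmetic: the paper cites the standard realization theorem in dimension $4$ and handles $n>4$ by the embedding ${\rm Diff}(M)\subset{\rm Diff}(M\times N)$, whereas you carry out the surgery realization directly for every $n\geqslant 4$.
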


\begin{proof} Since for any smooth topological manifolds $M$ and $N$ the group ${\rm Diff}(M)$ is 
a subgroup of ${\rm Diff}(M\times N)$, it suffices to statement for $n=4$.  
As is known (see, e.g.,\;\cite[Thm.\;12.29]{R}) Higman's Embedding Theorem \cite{H} (see also \cite[Thm.\,12.18]{R}) implies the existence of
a universal finitely presented group, that is a finitely presented group  $\mathcal U$ which contains  as a subgroup an isomorphic copy of every finitely presented group.\;In
turn, finite presen\-tedness of  $\mathcal U$ implies the existence of a connected compact smooth
oriented four-dimensional ma\-nifold $B$ whose fundamental group is isomorphic to $\mathcal U$ (see,\,e.g.,\,\cite[Thm.\,5.1.1]{CZ}).\;Consider  the universal cover $\widetilde B\to B$.\;Its
deck transformation group
is a subgroup of ${\rm Diff}(\widetilde B)$ isomorphic to the funda\-mental group of $B$, i.e., to the group $\mathcal U$, and this subgroup is a discrete transformation group of $\widetilde B$ acting freely
 (see,\,e.g.,\;\cite[Chap.\,V, Sect.\,8]{Ma1967}). Therefore, one can take $C_4=\widetilde B$. This completes the proof.
\end{proof}

\noindent{\bf 4.} Since every finite group is finitely presented, Theorem \ref{g}
 yields

\begin{corollary} \label{1}  For every integer $n\geqslant 4$,
there exists a simply connected noncompact smooth oriented $n$-dimensional topological manifold\;$C_n$\;such that the group
${\rm Diff}(C_n)$ contains a freely acting on $C_n$
isomorphic copy of every
finite group.
\end{corollary}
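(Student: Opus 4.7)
The plan is to deduce Corollary \ref{1} as an essentially immediate consequence of Theorem \ref{g}. The only non-tautological ingredient needed is the fact that every finite group is finitely presented, after which the manifold $C_n$ furnished by Theorem \ref{g} can be recycled verbatim.

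First I would recall why every finite group $G$ admits a finite presentation. One way is to use the elements of $G$ themselves as a generating set: if $|G|=m$ and $G=\{g_1,\ldots,g_m\}$, take generators $x_1,\ldots,x_m$ (one for each element) and impose, as relations, the $m^2$ words coming from the multiplication table, i.e.\ $x_i x_j = x_{k(i,j)}$ whenever $g_i g_j = g_{k(i,j)}$ in $G$. This is a presentation with finitely many generators and finitely many relations, so $G$ is finitely presented. (Any standard textbook treatment of combinatorial group theory supplies this; it is usually stated in the opening pages alongside the definition of a presentation.)

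With this in hand, I would take $C_n$ to be the manifold produced by Theorem \ref{g} for the given dimension $n\geqslant 4$. By that theorem $C_n$ is a simply connected noncompact smooth oriented $n$-dimensional topological manifold, and $\mathrm{Diff}(C_n)$ contains an isomorphic copy of every finitely presented group, realized as a discrete transformation group acting freely on $C_n$. Specialising the source group in Theorem \ref{g} to an arbitrary finite group $G$, which is finitely presented by the preceding paragraph, yields the required freely acting isomorphic copy of $G$ inside $\mathrm{Diff}(C_n)$.

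There is no real obstacle here: the whole content is packed into Theorem \ref{g} together with finite presentability of finite groups. The only thing to be careful about is not to weaken the conclusion — one should preserve the adjectives ``simply connected'', ``noncompact'', ``smooth'', ``oriented'' and the discreteness and freeness of the action, all of which are inherited directly from the statement of Theorem \ref{g}.
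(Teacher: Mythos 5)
Your proposal is correct and coincides with the paper's own derivation: the paper deduces Corollary \ref{1} from Theorem \ref{g} in one line, citing exactly the fact that every finite group is finitely presented (which you justify via the multiplication-table presentation). No issues.
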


If $G$ a Jordan group, then Definition \ref{Jd} implies the existence of a constant $d_G$, depending on $G$ only,
such that the order of every nonabelian finite simple
subgroup of $G$ is at most $d_G$. Since there are nonabelian finite simple groups whose order
is bigger than any given constant (for instance,  the alternating group ${\rm Alt}_n$ is simple for $n\geqslant 5$ and $|{\rm Alt}_n|=n!/2\xrightarrow[n\to\infty]{} {\infty}$), Theorem \ref{g} yields

\begin{corollary}\label{2}
For every integer $n\geqslant 4$,
there exists a simply connected noncompact smooth oriented $n$-dimensional topological manifold\;$C_n$\;such that the group
${\rm Diff}(C_n)$ is non-Jordan.
\end{corollary}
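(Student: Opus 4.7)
The plan is to derive the corollary directly from Theorem~\ref{g} (equivalently, from Corollary~\ref{1}), with the extra input being the standard fact that a Jordan group admits a uniform bound on the orders of its finite nonabelian simple subgroups.

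First I would take $C_n$ to be the manifold supplied by Theorem~\ref{g}. Since every finite group is finitely presented, $\mathrm{Diff}(C_n)$ contains an isomorphic copy of every finite group; in particular it contains $\mathrm{Alt}_m$ for every $m\geqslant 5$.

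Next I would argue that a Jordan group $G$ satisfies the following boundedness property: there exists a constant $d_G$ such that $|F|\leqslant d_G$ for every nonabelian finite simple subgroup $F\leqslant G$. Indeed, by Definition~\ref{Jd}, $F$ contains a normal abelian subgroup $A$ with $[F:A]\leqslant d_G$; since $F$ is simple and nonabelian, $A$ must be trivial, so $|F|=[F:A]\leqslant d_G$.

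Finally I would combine these two observations: if $\mathrm{Diff}(C_n)$ were Jordan with constant $d$, then every $\mathrm{Alt}_m$ ($m\geqslant 5$) embedded in $\mathrm{Diff}(C_n)$ would have to satisfy $m!/2=|\mathrm{Alt}_m|\leqslant d$, which fails as $m\to\infty$. This contradiction shows $\mathrm{Diff}(C_n)$ is non-Jordan. No step presents a genuine obstacle here: once Theorem~\ref{g} is in hand, the only ingredient is the elementary observation about simple subgroups of Jordan groups, which is already recorded in the paragraph preceding the corollary.
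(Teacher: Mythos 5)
Your proposal is correct and follows essentially the same route as the paper: the paragraph preceding the corollary makes exactly the observation that Jordaness bounds the orders of nonabelian finite simple subgroups, and then applies Theorem~\ref{g} together with the unboundedness of $|{\rm Alt}_m|=m!/2$. Your additional remark spelling out why the normal abelian subgroup of a nonabelian simple group must be trivial is a fine (and accurate) elaboration of that same argument.
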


Corollary \ref{2} answers the question raised in the correspondence with
I.\,Mun\-det i Riera \cite{Po3}, \cite{MiR13}. We note that there are connected noncompact manifolds $M$ such that ${\rm Diff}(M)$ is Jordan.\;For instance, if $M=\mathbb R^n$, this is so for $n=1, 2$ \cite{MiR13}, for $n=3$ \cite{MY}, and for $n=4$  \cite{KS},
but for $n\geqslant 5$ the answer is unknown; it would be interesting to find it. 
\vskip 2mm

\noindent{\bf 5.} If one focuses only on {\it finite} subgroups of diffeomorphism groups, it is possible to give
somewhat more explicit constructions of manifolds $M$ such that ${\rm Diff}(M)$ contains
an isomorphic copy of {\it every} finite group.\;Below are three of them,
each is based on the combination of the
idea used in the proof of Theorem\,\ref{g} with an appropriate result from group theory.


\vskip 2mm

\noindent{\it{Construction}} (i): 

Consider the direct product of symmetric groups
\begin{equation*}
G:=\prod_{n\geqslant 2}{\rm Sym}_n.
\end{equation*}
Since Coxeter groups have a solvable word problem \cite{T},
$G$ has a presenta\-tion with a recursively enumerable set of relations.\;By \cite{HNH}, this and countability of  the group $G$ imply that $G$ can be embedded in a
group $G^{\rm II}$ that  has two generators and a recursively enumerable set of relations.\;By  Higman's Embedding Theorem, $G^{\rm II}$ is embeddable in a finitely presented group $\widetilde G$.\;Thus
we may identify $G$ with a subgroup of $\widetilde G$.\;As above, finite presentedness of  $\widetilde G$ implies the existence of a connected compact smooth
oriented four-dimensional manifold whose fundamental group is isomorphic to $\widetilde G$.\;Since $G$ is a subgroup of $\widetilde G$, this manifold is covered by a connected smooth
oriented four-dimensional manifold whose fundamental group is iso\-morphic to\;$G$ (see,\,e.g.,\,\cite[Thm.\,10.2]{Ma1967}).\;The deck transformation group of  the universal covering manifold $S$ of the latter manifold is a subgroup of ${\rm Diff}(S)$ isomorphic to $G$ and acting on $S$ properly discontinuously.\;Since every finite group can be embedded in ${\rm Sym}_n$ for a suitable $n$, we conclude that  Corollaries \ref{1} and \ref{2} hold true with $C$ replaced by $S$.

\vskip 2mm

\noindent{\it Construction} (ii): 

Let $\mathcal N$ be the subgroup of the group of all permutations of ${\bf Z}$ generated by the
transposition $\sigma:=(1,2)$ and the ``translation'' $\tau$ defined by the condition $\tau(i)=i+1$
for every $i\in\bf Z$; see \cite[Example 4]{Po2}. Since $\tau^m\sigma\tau^{-m}=(m+1, m+2)$ for every $m$, and the set of transpositions $(1,2), (2,3),\ldots, (n-1,n)$ generates the symmetric group ${\rm Sym}_n$, the group $\mathcal N$ contains an isomorphic copy of every finite group.\;One can explicitly describe a set of defining relations of $\mathcal N$; this set  is recursively enumerable.
Arguing then
as in Const\-ruction (i), one proves the existence of a simply connected noncompact smooth oriented four-dimensional manifold $N$ such that  ${\rm Diff}(N)$ contains
an isomorphic copy of ${\mathcal N}$
acting on $N$ properly discontinuously. Thus  Corollaries \ref{1} and \ref{2} hold true with $C$ replaced by $N$.

\begin{remark} The group ${\rm Sym}_{\infty}$ of all permutations of ${\bf Z}$ that move only finitely many elements is a subgroup of $\mathcal N$.\;Therefore,
 there
is a simply connected noncompact smooth oriented four-dimensional manifold
whose diffeomor\-phism group
contains
an isomorphic copy of ${\rm Sym}_{\infty}$
that acts on
this manifold properly discontinuously. Representation theory of ${\rm Sym}_{\infty}$
is the subject of many publications.\;It would be interesting to explore which part of represen\-tations of  ${\rm Sym}_{\infty}$ is realizable in the cohomology of such a manifold.
\end{remark}

The existence of an embedding of $\mathcal N$ in ${\rm Diff}(N)$ is worth to compare with the following conjecture formulated in
\cite[Sect.\;3.3, Conj.\,1]{Po2}:

\vskip 2mm

\begin{conjecture}\label{Bir} $\mathcal N$ is not embeddable in ${\rm Bir}(X)$ for every irreducible algeb\-raic variety $X$.
\end{conjecture}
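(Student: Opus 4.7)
The plan is to attempt to derive a contradiction from a hypothetical embedding $\iota\colon \mathcal{N}\hookrightarrow {\rm Bir}(X)$, by exhibiting inside $\mathcal{N}$ a family of finite subgroups of a shape that no birational automorphism group of an irreducible algebraic variety can accommodate.

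The first step is to record the rich finite subgroup structure of $\mathcal{N}$. Since the transpositions $\tau^m\sigma\tau^{-m}=(m+1,m+2)$ generate, as $m$ ranges over any window of length $n-1$, an isomorphic copy of ${\rm Sym}_n$ inside $\mathcal{N}$, and since ${\rm Sym}_{pk}$ contains the elementary abelian $p$-subgroup spanned by $k$ disjoint $p$-cycles, the group $\mathcal{N}$ contains a copy of $(\mathbb{Z}/p\mathbb{Z})^k$ for every prime $p$ and every positive integer $k$. Consequently, the existence of $\iota$ would force ${\rm Bir}(X)$ to contain elementary abelian $p$-subgroups of arbitrarily large rank, for \emph{every} prime $p$.

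The second step is to invoke a uniform bound forbidding this. For an irreducible variety $X$ of dimension $n$ over an algebraically closed field of characteristic zero, one expects a constant $r=r(n,p)$, depending only on $n$ and $p$, such that every elementary abelian $p$-subgroup of ${\rm Bir}(X)$ has rank at most $r$. When $X$ is rationally connected, this is the substance of the Prokhorov--Shramov program, firmly in hand thanks to Birkar's proof of the BAB conjecture. For general $X$ the strategy is to reduce to this case via the Iitaka fibration $X\dashrightarrow Y$: the group ${\rm Bir}(X)$ acts on the base $Y$ whose generic fibre has Kodaira dimension zero, and one bounds the $p$-rank separately on base and fibre, exploiting that ${\rm Bir}$ of a variety of non-negative Kodaira dimension is tightly controlled (finite when $\kappa=\dim X$; virtually abelian of bounded rank when $\kappa=0$, via the Albanese).

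The hardest part is precisely this last reduction: establishing a uniform $p$-rank bound for ${\rm Bir}(X)$ of an \emph{arbitrary} irreducible variety, without any rationality assumption. The step that looks most delicate is controlling how torsion $p$-subgroups of the birational self-maps of the generic Iitaka fibre patch together along the base, and simultaneously bounding the kernel and image of the action of ${\rm Bir}(X)$ on $Y$. Should this full bound prove elusive, a fallback is to exploit the specific ``shifting'' structure of $\mathcal{N}$---namely the infinite-order element $\tau$ conjugating transpositions to consecutive transpositions---to rule out the embedding more directly, e.g.\ by studying how $\iota(\tau)$ must act on a suitable birationally invariant object (a minimal model, a resolution of indeterminacy, or the N\'eron--Severi lattice of an equivariant model) compatibly with an infinite nested tower of finite symmetric subgroups, extracting a dimension-theoretic contradiction from the resulting infinite equivariant data.
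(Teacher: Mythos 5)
There is a genuine gap---indeed, the statement you are asked to prove is stated in the paper as a \emph{conjecture}, not a theorem: the paper offers no proof of it, only the observation that it would follow from Conjecture \ref{simple} (finiteness of the set of nonabelian finite simple subgroups of ${\rm Bir}(X)$ up to isomorphism), precisely because $\mathcal N$ contains an isomorphic copy of every finite group. Your first step is correct and is essentially the same observation in a different guise: since $\mathcal N\supset {\rm Sym}_n$ for all $n$, an embedding $\mathcal N\hookrightarrow{\rm Bir}(X)$ would force ${\rm Bir}(X)$ to contain elementary abelian $p$-subgroups of arbitrarily large rank for every prime $p$ (the paper's Theorem \ref{fin} runs the analogous rank argument, but for ${\rm Diff}(M)$ with $M$ compact, where the Mann--Su bound is available; no such bound is proved here for ${\rm Bir}(X)$).

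The gap is your second step. A uniform bound on the rank of elementary abelian $p$-subgroups of ${\rm Bir}(X)$ for an \emph{arbitrary} irreducible variety $X$ is exactly the open content of the conjecture, and you do not establish it: the Prokhorov--Shramov/Birkar results you cite cover the rationally connected case, and your proposed reduction of the general case via the Iitaka fibration is only sketched---you yourself flag that bounding the kernel and image of the action on the base and patching torsion along the fibres is ``the hardest part'' and leave it unresolved. The ``fallback'' argument using the shift $\tau$ and an equivariant model is likewise only a heuristic: ${\rm Bir}(X)$ does not act by pseudo-automorphisms on any single model in general, and no mechanism is given for extracting a contradiction from the tower ${\rm Sym}_n\subset{\rm Sym}_{n+1}\subset\cdots$ together with $\iota(\tau)$. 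So the proposal reduces the conjecture to another statement that is itself open, rather than proving it; this is consistent with the paper, which deliberately leaves both Conjecture \ref{Bir} and Conjecture \ref{simple} as conjectures.
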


Since $\mathcal N$ contains an isomorphic copy of every finite group,
the following conjecture (cf.\;\cite[Sect.\,3.3, Question\,8]{Po2}) implies Conjecture \ref{Bir}:

\begin{conjecture}\label{simple}
For every irreducible algebraic variety $X$, there are only finitely many pairwise nonisomorphic nonabelian simple finite groups
whose isomorphic copy is contained in ${\rm Bir}(X)$.
\end{conjecture}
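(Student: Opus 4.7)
My overall plan is to try to bound $|G|$ uniformly over every nonabelian finite simple subgroup $G\subset{\rm Bir}(X)$; since only finitely many nonabelian finite simple groups have order below any given bound, this would suffice. The strategy combines equivariant birational geometry with the classification of finite simple groups, and I would proceed by induction on $\dim X$, the base case $\dim X=1$ being trivial and $\dim X=2$ following from the known Jordan property of ${\rm Bir}$ of surfaces.

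First, I would regularize the action. For any finite $G\subset{\rm Bir}(X)$, a standard construction (resolve the indeterminacies of the finitely many group elements, take the normalization of the closure of the graph, and apply $G$-equivariant resolution of singularities) produces a smooth projective variety $Y$ birational to $X$ on which $G$ acts regularly. Thus it suffices to bound $|G|$ uniformly over all such $G$-equivariant smooth projective models of $X$.

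Second, I would run a $G$-equivariant MMP on $Y$. The outcome is either a $G$-equivariant Mori fiber space $f\colon Y'\to Z$ whose generic fiber is a klt Fano of dimension $\leq\dim X$ with $\dim Z<\dim X$, or a $G$-minimal model on which $K_{Y'}$ is nef. In the Mori fiber case the simplicity of $G$ forces the kernel of the induced homomorphism $G\to{\rm Bir}(Z)$ to be either trivial or all of $G$: in the first subcase the inductive hypothesis applied to ${\rm Bir}(Z)$ finishes; in the second subcase $G$ acts faithfully on the generic fiber, which by Birkar's solution of the BAB conjecture lies in a bounded family of klt Fanos, yielding a uniform bound on $|{\rm Aut}|$ of such fibers. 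In the minimal model case the Iitaka fibration is canonical and hence $G$-equivariant: the base is of general type and its $|{\rm Aut}|$ is bounded in terms of the canonical volume by the Hacon--McKernan--Xu theorem, while the fibers have Kodaira dimension zero.

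The principal obstacle is the Kodaira dimension zero stratum, for which no analogue of BAB is currently available and the automorphism groups of Calabi--Yau-type varieties are not known to be bounded in general. A natural attempt would invoke a Beauville--Bogomolov-type decomposition and bound finite simple subgroups factor by factor: abelian factors contribute subgroups of ${\rm GL}_{2g}(\mathbb Z)$ controlled by Minkowski, hyperk\"ahler factors admit Mukai-type lattice bounds, while strict Calabi--Yau factors require a boundedness statement that is presently open. Closing the induction uniformly across these cases, and in particular ensuring that the bounds depend only on birational invariants of the original $X$ rather than on the chosen $G$-equivariant model $Y'$, is what I would expect to be the genuine difficulty, and it is presumably the reason the statement is posed as a conjecture rather than as a theorem.
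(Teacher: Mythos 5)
The statement you set out to prove is not proved in the paper at all: it appears there as Conjecture~\ref{simple}, posed as an open question (cf.\ \cite[Sect.\,3.3, Question\,8]{Po2}), and it is invoked only as a hypothesis that would imply Conjecture~\ref{Bir}. So there is no proof in the paper to compare with, and your text, by your own admission in the last paragraph, is a strategy sketch rather than a proof: the Kodaira-dimension-zero stratum is left open, which already means the argument does not close.

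Beyond that admitted gap, several intermediate steps are not correct as stated. In the Mori fiber space case with $G$ acting faithfully on the generic fiber, boundedness of the family of klt Fanos (Birkar) does not give ``a uniform bound on $|{\rm Aut}|$'' of the fibers --- automorphism groups of Fanos are typically infinite (e.g.\ ${\rm Aut}(\mathbb P^n)$); what one needs is a Jordan/Minkowski-type bound on \emph{finite} subgroups of automorphisms of the generic fiber over the non-closed field $k(Z)$, which requires a separate argument. In the minimal model case, the base of the Iitaka fibration need not be of general type, so the Hacon--McKernan--Xu bound does not apply directly; one must instead exploit that the induced action on the base preserves the polarization coming from pluricanonical forms. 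Finally, the induction itself is not well-founded as set up: your inductive hypothesis bounds the simple subgroups of ${\rm Bir}(Z)$ for a \emph{fixed} $Z$, but the base $Z$ (and the model $Y'$) varies with the group $G$, so you need a bound uniform over all bases that can arise --- i.e.\ a bound depending only on dimension, or on a bounded family --- which is strictly stronger than what you assume. These are exactly the difficulties addressed in the later work of Prokhorov and Shramov (conditional on, and then completed by, Birkar's boundedness theorem), and their absence here is why the statement must remain, as in the paper, a conjecture rather than a theorem.
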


\noindent {\it Construction} (iii): 

Consider Richard J. Thompson's group $V$ (concerning its definition and properties used below see,\,e.g.\,\cite[\S{6}]{CFP}).\;As
$V$ is finitely presented,
the same argument as in the proof of Theorem \ref{g} yields the existence of a simply connected noncompact smooth oriented four-dimensional manifold\;$T$\,whose diffeomorphism group
${\rm Diff}(T)$ contains
an isomorphic copy of $V$ acting on $T$ properly discontinuously.\;Since $V$ contains a subgroup isomorphic 
to
${\rm Sym}_n$ for every $n$,
the above Corollaries \ref{1} and \ref{2} hold true with $C$ replaced by $T$.

\begin{remark} Since the explicit presentation of $V$ by 4 generators and 14 relations is known (see \cite[Lemma 6.1]{CFP}),\;the proof of \cite[Thm.\,5.1.1]{CZ} yields the explicit way to construct $T$ from the connected sum of four copies of $S^1\times S^3$ successively glueing to it fourteen ``handles'' $D^2\times S^2$.
\end{remark}

\noindent {\bf 6.} Now we consider the case of compact manifolds.

According to \cite{MiR13},
the following conjecture  is attributed
to \'E.\;Ghys (we reformulate it using Definition \ref{Jd}):

\begin{conjecture}\label{cJ}{\it For\;every\;connected\;com\-pact\;smooth\;manifold\;$M$,\;the gro\-up ${\rm Diff}(M)$ is Jordan.}
\end{conjecture}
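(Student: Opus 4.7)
The natural strategy is to reduce from diffeomorphisms to isometries.\;Given a finite subgroup $F\subset{\rm Diff}(M)$, averaging an arbitrary Riemannian metric over $F$ produces an $F$-invariant metric $g_F$, whence $F\hookrightarrow{\rm Isom}(M,g_F)$. By the Myers--Steenrod theorem this is a Lie group, and compactness of $M$ makes it a compact Lie group of dimension at most $n(n+1)/2$, where $n=\dim M$. Its identity component $K:={\rm Isom}(M,g_F)^0$ is a \emph{connected} compact Lie group of dimension bounded in terms of $n$ alone, and such groups admit faithful unitary representations of degree bounded by some $N=N(n)$. Jordan's original theorem for ${\rm GL}_N(\mathbb C)$ therefore supplies a constant $d_n$, depending only on $n$, such that $F_0:=F\cap K$ contains an abelian normal subgroup of index at most $d_n$.

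This reduces the conjecture to a uniform bound, over all finite subgroups $F\subset{\rm Diff}(M)$, on the order of the image $\overline F$ of $F$ in $\pi_0({\rm Isom}(M,g_F))$: combined with the Jordan constant for $F_0$, such a bound produces the $d_{{\rm Diff}(M)}$ demanded by Definition~\ref{Jd}. Since $\pi_0({\rm Isom}(M,g_F))$ maps naturally to the mapping class group $\pi_0({\rm Diff}(M))$, one is really asking that the finite subgroups of the mapping class group which arise from genuine smooth actions on $M$ have uniformly bounded Jordan constant.

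The main obstacle is exactly this control of $\overline F$: the target group $\pi_0({\rm Isom}(M,g_F))$ depends on the metric $g_F$, which depends on $F$, and a priori can grow with $F$.\;A first plan of attack is a local-linearization argument: pick a point $x$ in a minimal $F_0$-invariant stratum, use slice-theoretic linearization to embed $\overline F$ into ${\rm GL}(T_xM)$, and apply Jordan's theorem a second time. A complementary approach is to produce a faithful finite-dimensional representation of $\pi_0({\rm Diff}(M))$ (or of the relevant quotient) from $F$-invariant cohomological data: the induced action on $H^*(M;\mathbb Q)$, on characteristic classes, or on a rational-homotopy model of $M$. Either route must supply information beyond the Riemannian reduction, which by itself leaves the component group uncontrolled; this is the essential difficulty and the reason the conjecture is presently known only for restricted classes of $M$ (spheres, tori, small-dimensional cases) and remains open in general. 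A natural intermediate target, and perhaps the correct first step, is the weaker statement of Conjecture~\ref{simple}: bound not the Jordan constant but merely the number of isomorphism classes of nonabelian finite simple subgroups that can occur in ${\rm Diff}(M)$ — this relaxes the quantitative demand while still forcing one to engage with $\pi_0({\rm Isom}(M,g_F))$ in a serious way.
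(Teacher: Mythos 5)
The statement you are asked about is not a theorem of the paper: Conjecture~\ref{cJ} is precisely that, a conjecture (attributed to \'E.~Ghys), and the paper offers no proof of it --- only evidence in its favor, namely the known cases (i)--(iii) quoted from the literature and the finiteness result of Theorem~\ref{fin}, which the paper proves by a quite different mechanism (the Mann--Su bound on the rank of elementary abelian $p$-subgroups of ${\rm Diff}(M)$, applied case by case through the classification of finite simple groups). Your text, to its credit, does not pretend otherwise: after the correct and standard opening moves --- average a metric over the finite group $F$, invoke Myers--Steenrod to land $F$ in the compact Lie group ${\rm Isom}(M,g_F)$, and apply Jordan's classical theorem to the identity component, whose dimension is bounded by $\dim M(\dim M+1)/2$ --- you state explicitly that the image $\overline F$ in $\pi_0({\rm Isom}(M,g_F))$ is uncontrolled because the metric, and hence the isometry group, depends on $F$. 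That is exactly the gap, and nothing in your proposal closes it; so what you have is a reduction plus an honest acknowledgment that the reduction does not suffice, not a proof.

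Two further points about the fallback routes you sketch. The slice-linearization idea does not embed $\overline F$ (or $F$) into ${\rm GL}(T_xM)$ unless the group actually fixes the point $x$; a general finite subgroup of ${\rm Diff}(M)$ has no fixed point, and a minimal invariant stratum only gives you an orbit, not a common fixed point, so Jordan's theorem cannot be applied ``a second time'' this way. Likewise, the action of $F$ on $H^*(M;\mathbb Q)$ or on a rational homotopy model need not be faithful (finite groups can act smoothly yet trivially on cohomology), so that route also fails to bound $|\overline F|$ by itself. Finally, the ``intermediate target'' you point to is misattributed: Conjecture~\ref{simple} in this paper concerns ${\rm Bir}(X)$ for irreducible algebraic varieties, not diffeomorphism groups; the analogous finiteness statement for compact manifolds is what Theorem~\ref{fin} actually establishes, and it is proved not through ${\rm Isom}(M,g_F)$ but through the Mann--Su rank bound --- a useful contrast, since that argument bypasses the component-group difficulty entirely at the cost of yielding only parameter bounds for the simple subgroups rather than Jordaness.
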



There are several evidences in favor of  Conjecture \ref{cJ}.
For instance, it is true in either of the following cases:
\begin{enumerate}[\hskip 4.2mm \rm(i)]
\item $M$ is oriented and $\dim(M)\leqslant 2$ (see, e.g., \cite[Thm.\,1.3]{MiR10});
\item $\dim(M)=n$ and $M$ admits an unramified covering $\widetilde{M}\to M$ such that $H^1(\widetilde{M}, \bf Z)$ contains the cohomology classes $\alpha_1,\ldots, \alpha_n$ satisfying
    $\alpha_1\cup \ldots\cup  \alpha_n\neq 0$ (see \cite[Thm.\,1.4(1)]{MiR10}).
     \item $M$ has torsion free integral cohomology supported in even degrees (see\;\cite{MR13Odd}).
\end{enumerate}

Definition \ref{Jd} implies that in every Jordan group every set of pairwise nonisomorphic nonabelian finite simple subgroups is finite.\;Theorem \ref{fin} below (based on the classification of finite simple groups) shows that a certain finiteness property of nonabelian finite simple subgroups indeed holds for diffeomorphism groups of connected compact manifolds.\;So Theorem \ref{fin}
may be considered as another evidence in favor of Conjecture \ref{cJ}.\;It shows that
noncompactness of the above-considered ``highly symmetric'' manifolds $C$, $S$, $N$, and $T$ admitting a free action of every finite group is an inalienable property  (the peculiarity here is considering  actions of all finite groups on the {\it same} manifold: it is known \cite{A} that every finite group acts freely on some {\it compact} two-dimensional manifold, which, however, depends on this group).

\vskip 2mm

\noindent {\bf 7.} In order to formulate Theorem \ref{fin}, recall  (see, e.g., \cite[1.2]{W}) that by the adverted classification
the complete list of nonabelian finite simple groups (considered up to isomorphism) consists
of 26 sporadic groups and the follo\-wing infinite series depending on parameters (below we use the notation
of \cite{W}):

\begin{enumerate}[\hskip 4mm $\bullet$]
\item series $\mathcal S_1$ depending on one parameter $n\in \mathbb N:=\{1, 2, \ldots\}$:
\begin{align}\label{A}
{\rm Alt}_n, n\geqslant 5;
{}^2\!B_2(2^{2n+1}); {}^2\!G_2(3^{2n+1}); {}^2\!F_4(2^{2n+1});
\end{align}
\item series $\mathcal S_2$ depending on two parameters $p, a\in \mathbb N$, where $p$ is a prime:
\begin{align}\label{E}
G_2(p^a), (p, a)\neq (2, 1); F_4(p^a); E_6(p^a); {}^2\!E_6(p^a); {}^3\!D_4(p^a); E_7(p^a); E_8(p^a);
\end{align}
\item series ${\mathcal S}_3$ depending on three parameters $p, a, n\in \mathbb N$, where $p$ is a prime:
\begin{equation}\label{C}
\begin{split}
&{\rm PSL}_n(p^a), n\geqslant 2, (n, p, a)\neq (2, 2, 1), (2, 3, 1);\\
&{\rm PSU}_n(p^a), n\geqslant 3, (n, p, a)\neq (3, 2, 1);\\
&{\rm PSp}_{2n}(p^a), n\geqslant 2, (n, p, a)\neq (2, 2, 1);\\
&{\rm P}\Omega_{2n+1}(p^a), n\geqslant 3, p\neq 2;\\
&{\rm P}\Omega^+_{2n}(p^a), n\geqslant 4;\\
&{\rm P}\Omega^-_{2n}(p^a), n\geqslant 4.
\end{split}
\end{equation}
\end{enumerate}

\begin{theorem}\label{fin}
Let $M$ be a connected compact smooth manifold.\,There exists a real number $b_M$, depending on $M$ only, such that
every nonabelian finite simple subgroup $F$ of $\;{\rm Diff}(M)$, belonging to one of the series
$\mathcal S_1$, $\mathcal S_2$, or $\mathcal S_3$, has
the following property:
\begin{enumerate}[\hskip 4mm \rm(i)]
\item if $F\in\mathcal S_1$, then $n\leqslant b_M$;
\item if $F\in \mathcal S_2$, then $a\leqslant b_M$;
\item if $F\in\mathcal S_3$, then $a\leqslant b_M$ and $n\leqslant b_M$.
\end{enumerate}
\end{theorem}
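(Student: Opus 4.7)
The plan is to reduce everything to a classical $p$-rank bound on compact transformation groups due to Mann--Su: for every connected compact smooth manifold $M$ there exists an integer $r(M)$ such that for \emph{every} prime $p$, any elementary abelian $p$-subgroup of ${\rm Diff}(M)$ has rank at most $r(M)$. (The uniformity in $p$ is available because the Mann--Su bound is controlled by $\dim_{\mathbb{F}_p}H^*(M;\mathbb{F}_p)$, which is itself bounded independently of $p$ in view of the finite generation of $H^*(M;\mathbb{Z})$.) Any elementary abelian $p$-subgroup of the given simple subgroup $F\subseteq{\rm Diff}(M)$ is then of rank at most $r(M)$. It therefore suffices, for each of the three series, to exhibit an elementary abelian $p$-subgroup whose rank is an explicit unbounded function of the defining parameters; inverting the resulting growth estimate yields the desired bound $b_M$.

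For series $\mathcal{S}_1$: in ${\rm Alt}_n$ the disjoint products $(1,2)(3,4),\,(5,6)(7,8),\ldots$ generate an elementary abelian $2$-group of rank $\lfloor n/4\rfloor$; in the Suzuki group ${}^2\!B_2(2^{2n+1})$ the centre of a Sylow $2$-subgroup is elementary abelian of $\mathbb{F}_2$-rank $2n+1$; and analogous elementary abelian $p$-subgroups of rank unbounded in $n$ live inside the Sylow $p$-subgroups of the Ree groups ${}^2\!G_2(3^{2n+1})$ and ${}^2\!F_4(2^{2n+1})$. Comparison with $r(M)$ produces in each case an explicit bound $n\leq b_M$.

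For series $\mathcal{S}_2$ and $\mathcal{S}_3$, which are (possibly twisted) groups of Lie type over $\mathbb{F}_{p^a}$, the root subgroup $U_\alpha$ attached to the highest root is elementary abelian and, as an additive group, isomorphic to $\mathbb{F}_{p^a}$ (or a subfield thereof in the twisted cases ${}^2\!E_6,\,{}^3\!D_4,\,{\rm PSU}_n,\,{\rm P}\Omega^-_{2n}$); its $\mathbb{F}_p$-rank is therefore a known positive multiple of $a$. This immediately yields $a\leq b_M$, handling (ii). For (iii) one must in addition bound $n$; here I would use a maximal abelian unipotent subgroup, for instance the strictly upper-triangular matrices over $\mathbb{F}_{p^a}$ supported in an off-diagonal $\lceil n/2\rceil\times\lfloor n/2\rfloor$ block (with suitable modifications respecting the relevant bilinear or hermitian form), which is elementary abelian of $\mathbb{F}_p$-rank at least a positive multiple of $a\cdot n$. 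The single inequality then forces both $a\leq b_M$ and $n\leq b_M$.

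The main technical obstacle is pure bookkeeping: one must verify, case by case, that the claimed ``large'' unipotent subgroups are indeed elementary abelian. For the untwisted groups this reduces to noting that a set of positive roots closed under the Chevalley commutator bracket lifts to an elementary abelian $p$-subgroup; for the twisted variants (${}^2\!B_2,\,{}^2\!G_2,\,{}^2\!F_4,\,{}^2\!E_6,\,{}^3\!D_4,\,{\rm PSU}_n,\,{\rm P}\Omega^-_{2n}$) one additionally descends to the fixed points of the appropriate Steinberg endomorphism and checks the commutator relations, for which standard references on finite groups of Lie type suffice.
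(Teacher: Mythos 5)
Your proposal follows essentially the same route as the paper: both reduce the statement to the Mann--Su bound on the rank of elementary abelian subgroups of ${\rm Diff}(M)$ for a compact manifold $M$, and then, series by series, exhibit elementary abelian $p$-subgroups of $F$ whose rank grows unboundedly in the parameters, so that the single rank inequality inverts to give $b_M$. The differences are only in bookkeeping (the paper uses $3$-cycles in ${\rm Alt}_n$ rather than double transpositions, and cites Vdovin, Barry and Wong for the large abelian unipotent subgroups instead of constructing them from root subgroups and off-diagonal blocks by hand), so your argument is correct and matches the paper's proof.
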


\begin{proof}
 1. The general plan is as follows.  By \cite[Thm.\,2.5]{MS}, 
 there exists a real number $e^{\ }_M$,  depending on $M$
only, such that the rank of every ele\-mentary Abelian subgroup\footnote{Recall that a finite Abelian group $A$ is called ele\-mentary if the order of every nonidentity element of $A$ is equal to a prime number $p$ (depending on $A$). The order of $A$ is then $p^r$, and the integer $r$ is called the rank of $A$.} of $\,{\rm Diff}(M)$ is at most $e_M$.\;This inequality is then applied to the suitable  elementary Abelian subgroups of the groups of each type listed in \eqref{A}, \eqref{E}, and \eqref{C}; finding these elementary subgroups is performed case by case. This yields the upper bounds of the parameters $n$ and/or $a$ corresponding to each of the types listed in
\eqref{A}, \eqref{E}, and \eqref{C}.\;The maximum of these upper bounds in then the sought-for bound $b_M$.
 
 Following this plan, we now consider separately every group $F$ from the lists
 \eqref{A}, \eqref{E} è \eqref{C}.

 2. Let $F\!=\!{\rm Alt}_n$, $n\geqslant 5$.\;Let $d$\,be the quotient of dividing
  $n$ by\;$3$.\;The subgroup of
${\rm Alt}_n$ generated by the $3$-cycles $(1, 2, 3), \ldots, (3d-2, 3d-1, 3d)$  is an elementary Abelian group of order  $3^d$.\;Therefore, $d\leqslant e^{\ }_M$; whence  $n\leqslant 3e^{\ }_M+2$.

 3. Let $F={}^2\!B_2(2^{2n+1})$. Then $F$ contains an elementary Abelian subgroup of order
 $2^{2n+1}$ (see\;\cite[Sect.\;4.2.2, p.\,115]{W}). Therefore, $2n+1\leqslant e^{\ }_M$, whence $n\leqslant (e^{\ }_M-1)/2$.

 4. Let $F={}^2\!G_2(3^{2n+1})$. Then $F$ contains an elementary Abelian subgroup of order
$3^{6n+3}$
(see\;\cite[Thm.\;4.2(i)]{W}). Therefore, $6n+3\leqslant e^{\ }_M$; whence $n\leqslant (e^{\ }_M-3)/6$.

5. Let $F={}^2\!F_4(2^{2n+1})$. Then $F$ contains an elementary Abelian subgroup of order
$2^{10n+5}$ (see\;\cite[\S3]{Vd}). Therefore, $10n+5\leqslant e^{\ }_M$; whence $n\leqslant (e^{\ }_M-5)/10$.

6. Let $F=G_2(p^a), (p, a)\neq (2, 1)$.
Then $F$ containes an elementary Abelian subgroup of order $p^{3a}$
(see\;\cite[\S3]{Vd}). Therefore, $3a \leqslant e^{\ }_M$; whence
$a\leqslant e^{\ }_M/3$.

6. Let $F=F_4(p^a)$. Then $F$ contains an elementary Abelian subgroup of order
$p^{9a}$ (see\;\cite[\S3]{Vd}). Therefore, $9a \leqslant e^{\ }_M$; whence
$a\leqslant e^{\ }_M/9$.

7. Let $F=E_6(p^a)$.  Then $F$ contains an elementary Abelian subgroup of order
$p^{16a}$ (see\;\cite[\S3]{Vd}).  Therefore, $16a \leqslant e^{\ }_M$; whence
$a\leqslant e^{\ }_M/16$.

8. Let $F={}^2\!E_6(p^a)$.  Then $F$ contains an elementary Abelian subgroup of order
$p^{6a}$ (see\;\cite[\S3]{Vd}).  Therefore, $6a \leqslant e^{\ }_M$; whence
$a\leqslant e^{\ }_M/6$.

9.   Let $F={}^3\!D_4(p^a)$.\;Then $F$ contains an elementary Abelian subgroup of order $p^{9a}$ (see\;\cite[Thm.\,4.3(i)]{W}). Therefore, $9a\leqslant e^{\ }_M$; whence $a\leqslant e^{\ }_M/9$.

10. Let $F=E_7(p^a)$.\;Then $F$ contains an elementary Abelian subgroup of order
$p^{27a}$ (see\;\cite[\S3]{Vd}).
Therefore, $27a \leqslant e^{\ }_M$; whence
$a\leqslant e^{\ }_M/27$.

11. Let $F=E_8(p^a)$.\;Then $F$ contains an elementary Abelian subgroup of order
$p^{36a}$ (see\;\cite[\S3]{Vd}).
Therefore, $36a \leqslant e^{\ }_M$; whence
$a\leqslant e^{\ }_M/36$.

12.  Let $F={\rm PSL}_n(p^a), n\geqslant 2, (n, p, a)\neq (2, 2, 1), (2, 3, 1)$.\;For every positi\-ve integer $s<n$,
 $F$ contains an elementary Abelian subgroup of order $p^{as(n-s)}$  (see\;\cite[Sect.\;3.3.3]{W} and \cite[Thm.\;2.1]{Ba79}). Therefore, $a[n/2](n- [n/2])\leqslant e^{\ }_M$.

13. Let $F\!=\!{\rm PSU}_n(p^a), n\!\geqslant\! 3, (n, p, a)\!\neq\! (3, 2, 1)$.\;Then $F$ contains an ele\-mentary Abelian subgroup of order $p^{a(n-1)^2/4}$\;(see\;\cite[Thm.\;1 and the last paragraph of  \S3]{Wo83}, and also \cite[Sect.\;3.6.2]{W}).\;Therefore, $a(n-1)^2\!/4\!\leqslant\! e^{\ }_M$.

14.  Let $F={\rm PSp}_{2n}(p^a), n\!\geqslant\! 2, (n, p, a)\!\neq \!(2, 2, 1)$.
Then $F$ contains an elementary Abelian subgroup of order $p^{an(n+1)/2}$
(see\;\cite[Thms. 3.7(i), 3.8(i)]{W} and \cite[Thm.\;2.5, Cor.\;4.3]{Ba79}). Therefore, $an(n+1)/2\leqslant e^{\ }_M$.

15.  Let $F={\rm P}\Omega_{2n+1}(p^a), n\geqslant 3, p\neq 2$.
Then $F$ contains an elementary Abelian subgroup of order $p^{an(n-1)/2}$
(see\;\cite[Thm.\;3.10(i)]{W} and \cite[Thms.\;4.1, 4.2, 5.1, 5.2, 5.3]{Ba79}). Therefore,
$an(n-1)/2\leqslant e^{\ }_M$.

16.  Let $F={\rm P}\Omega^+_{2n}(p^a), n\geqslant 4$.
Then $F$ contains an elementary Abelian subgroup of order $p^{an(n-1)/2}$
(see\;\cite[Thm.\;3.12(i)]{W} and \cite[Thms.\;3.1, 3.2]{Ba79}). Therefore,
$an(n-1)/2 \leqslant e^{\ }_M$.

17.  Let $F={\rm P}\Omega^-_{2n}(p^a), n\geqslant 4$.
Then $F$ contains an elementary Abelian subgroup of order $p^{a(n-1)(n-2)/2}$
(see\;\cite[Thm.\;3.11(i)]{W}). Therefore, $a(n-1)(n-2)/2 \leqslant e^{\ }_M$.

This completes the proof.
\end{proof}

 \end{document}